\documentclass[a4paper,12pt,reqno]{amsart}

\usepackage{amsmath}        
\usepackage{amsfonts}       
\usepackage{amsthm}         
\usepackage{amssymb}%
\usepackage{scalerel}
\usepackage{verbatim} 

\numberwithin{equation}{section}

\theoremstyle{plain}
\newtheorem{theorem}{Theorem}[section]
\newtheorem{lemma}[theorem]{Lemma}
\newtheorem{proposition}[theorem]{Proposition}
\newtheorem{corollary}[theorem]{Corollary}

\newtheorem*{mainthm}{Main Theorem}

\theoremstyle{definition}
\newtheorem{definition}[theorem]{Definition}
\newtheorem{remark}[theorem]{Remark}

\DeclareMathOperator{\rad}{rad}

\DeclareMathOperator{\chr}{char}
\DeclareMathOperator{\ndeg}{ndeg}

\DeclareMathOperator{\ql}{ql}
\DeclareMathOperator{\gr}{Gr}
\DeclareMathOperator{\diag}{diag}
\DeclareMathOperator*{\bp}{\scalerel*{\perp}{\sum}}
\DeclareMathOperator{\typ}{type}

\newcommand{\BB}{\mathcal{B}}

\newcommand{\HH}{\mathbb{H}}
\renewcommand{\phi}{\varphi}

\newcommand{\dom}{\preccurlyeq} 

\newcommand{\iw}{\mathrm{i}_{\mathrm{W}}} 
\newcommand{\iql}{\mathrm{i}_{\mathrm{d}}} 
\newcommand{\itt}{\mathrm{i}_{\mathrm{t}}} 

\newcommand{\an}{\mathrm{an}}

\newcommand{\com}{\mathrm{c}}

\newcommand{\qf}[1]{\langle #1 \rangle} 
\newcommand{\pff}[1]{\langle\!\langle #1 \rangle\!\rangle} 
\newcommand{\qpf}[1]{\langle\!\langle #1 ]]} 

\newcommand{\implylr}{\Longleftrightarrow}

\newcommand{\sml}{\stackrel{{\rm s}}{\sim}}

\newcommand{\vis}{\stackrel{{\rm v}}{\sim}}
\newcommand{\wit}{\stackrel{{\rm w}}{\sim}}

\begin{document}
\title[Nonsimilar half neighbors]{Nonsimilar half-neighbors over fields of characteristic $2$}

\author{Detlev W. Hoffmann}
\address{Fakult\"at f\"ur Mathematik, Technische Universit\"at Dortmund, D-44221 Dortmund, Germany}
\email{detlev.hoffmann@math.tu-dortmund.de}

\author{Magnus Wiedeking}
\email{magnus.wiedeking@tu-dortmund.de}
\date{\today}

\begin{abstract}
The total isotropy index of a quadratic form $\phi$ over a field $F$ is the maximum dimension
of any totally isotropic subspace of $\phi$. If $\phi$ is anisotropic and  $\psi$ is another anisotropic quadratic form over $F$ of
the same dimension, then $\phi$ and $\psi$ are called Vishik-equivalent if, over any field 
extension $E/F$, their total isotropy indices are the same.  In characteristic $\neq 2$,
Vishik-equivalence implies similarity in all dimensions $\leq 7$ and in all odd dimensions,
but there are counterexamples in all even dimensions $\geq 8$.
In this paper, 
we construct semi-singular anisotropic 
quadratic forms of dimension $2^m$ for any $m\geq 3$ and defined over a suitable extension of
any given field $F_0$ of characteristic $2$ that are Vishik-equivalent but not similar, thus
completing the list of such examples provided earlier by the
first author and Krist\'yna Zemkov\'a.
\end{abstract}

\maketitle

\section{Introduction} \label{Sec:Intro}

Consider two quadratic forms $\phi$ and $\psi$ over a field $F$ and of the same finite dimension
such that their associated projective
quadrics $X_\phi$ and $X_\psi$ are smooth, i.e., such that the (bilinear) radicals of these
forms have dimension at most $1$ and the restriction to the radical is anisotropic.
Such forms are called nondegenerate, and
radicals of dimension $1$ can then only occur in characteristic $2$
for odd-dimensional forms.  Now these two forms are similar iff their quadrics are
isomorphic as varieties (see \cite[Cor.~69.5]{ekm}).  In this situation, 
if $E/F$ is any field extension, then obviously, the total isotropy index of $\phi$ and
$\psi$ after scalar extension to $E$ (i.e., the dimensions
of any maximal totally isotropic subspace of $\phi_E$ resp. $\psi_E$) will be the same.
Now  a theorem first proved by Vishik \cite{v} in characteristic $0$,
later by Karpenko \cite{k} in characteristic not $2$
(for a characteristic free version, see \cite[Theorem~93.1]{ekm}), states that the
latter condition on the equality of the total isotropy indices over any field extension is equivalent to the fact
that $X_\phi$ and $X_\psi$ have isomorphic motives in the category of
(integral or $\bmod\,2$) Chow motives, in which case we call $\phi$ and $\psi$
motivically equivalent.  To study the relation between similarity and motivic equivalence,
one may restrict to the study of anisotropic forms, and so we may ask: Are
two motivically equivalent anisotropic quadratic forms similar?

\renewcommand*{\thefootnote}{$\dagger$}
In characteristic not $2$, it was shown by Izhboldin \cite{i1}, \cite{i2} that the answer is positive
for forms of dimension $\leq 7$ as well as for forms of odd dimension, but that
there are counter-examples
in all even dimensions $\geq 8$ except possibly $12$.\footnote{While this paper was written,
Nils Poprawski constructed the first examples of nonsimilar motivically equivalent quadratic forms
in dimension $12$.}  In characteristic $2$, the answer is still positive
for odd dimensional nondegenerate forms,
 see \cite[Theorem~27.3]{ekm}.
The proofs actually use the criterion on the equality of the total isotropy indices
over any field extension.  So we give this criterion a name and
call two (anisotropic) quadratic forms $\phi$ and $\psi$  over $F$ of the same dimension
Vishik-equivalent if, over any field extension $E/F$, the total isotropy indices of
$\phi_E$ and $\psi_E$ are the same.  In this paper and in view of Izhboldin's result,
we focus on fields of characteristic $2$.  Then it is very well possible that
anisotropic quadratic forms may be degenerate, so we systematically include this case
and we ask in all generality:  Are two Vishik-equivalent anisotropic quadratic forms similar? 

Recall that over a field $F$ of characteristic $2$, a quadratic form $\phi$ can be decomposed
as $\phi\cong \phi_r\perp\phi_s$ with $\phi_r$ nonsingular of even dimension
$2r$, and $\phi_s$ totally singular of dimension $s$,
where $\phi_s$ is just the restriction of $\phi$
to its bilinear radical, also called the quasi-linear part of $\phi$ and denoted by $\ql(\phi)$.
So $\dim(\phi)=2r+s$, and $(r,s)$ is called the type of $\phi$, and we write
$\typ(\phi)=(r,s)$ (for more details, see the next
section).  Note that if two anisotropic forms $\phi$ and $\psi$ over $F$ are Vishik-equivalent,
then $\typ(\phi)=\typ(\psi)$  (see \cite[Lemma~2.8]{hz}).

Still in characteristic $2$, Vishik-equivalence of anisotropic forms
of type $(r,s)$ and dimension $d=2r+s$ implies similarity
in the following cases (the list is taken from  \cite[Theorem 1.1]{hz}):
\begin{itemize}
\item $d\leq 5$ (\cite[Theorem~4.2]{hz} and further references there);
\item $s=1$ (\cite[Theorem~27.3]{ekm});
\item $s=3$ (\cite[Theorem~3.1]{hz});
\item one of the forms is a Pfister neighbor of codimension $\leq 5$
(\cite[Corollary~5.4]{hz});
\item one of the forms is a nondegenerate so-called excellent form (\cite[Corollary~5.5]{hz}).
\end{itemize}
In \cite{hz}, using the theory of half-neighbors, constructions were given
to obtain fields of characteristic $2$ and anisotropic quadratic forms $\phi$ and
$\psi$ over $F$ of type $(r,s)$ and dimension $d=2r+s$ that are Vishik-equivalent but not
similar for the following types:
\begin{itemize}
\item $d=2^n$ and type $(2^{n-1},0)$, $n\geq 3$  (\cite[Theorem 6.5, Corollary 6.6]{hz});
\item $d=2^3$ and types  $(3,2)$, $(2,4)$, $(1,6)$ (\cite[\S 6.3]{hz}).
\end{itemize}
In this paper, we complete the list of such examples in dimensions $2^n\geq 8$ that
are not totally singular (i.e., of type $(r,s)$ with $r\geq 1$).  The main result of this paper is the following.
\begin{mainthm}  To any nonnegative integers $n,r,s$ with $n\geq 3$, $r\geq 1$
and $2^n=2r+s$, and for any field $F_0$ of characteristic $2$,  there exist a field
extension $F/F_0$ and
anisotropic quadratic forms $\phi$ and $\psi$ over $F$ of type $(r,s)$ such that $\phi$
and $\psi$ are Vishik-equivalent but not similar.
\end{mainthm}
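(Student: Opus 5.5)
The plan is to use half-neighbors of a suitable quadratic Pfister form, as in \cite{hz}. Let $\pi=\qpf{a_1,\dots,a_{n-1},b}$ be an anisotropic quasi-Pfister-type form of dimension $2^{n+1}$ over a suitable field $F$ of the form $\pi\cong \tau\otimes\qpf{\dots}$. Two forms $\phi,\psi$ of dimension $2^n$ are \emph{half-neighbors} if $\phi\perp c\psi$ is similar to $\pi$ for some $c\in F^*$, i.e. $a\phi\perp b\psi\sim\pi$. The key fact is this: if $\phi$ and $\psi$ are half-neighbors, they are Vishik-equivalent. To prove it, compare total isotropy indices over any $E/F$. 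If $\pi_E$ is anisotropic, then $\phi_E$ and $\psi_E$ are anisotropic. If $\pi_E$ is isotropic, it is hyperbolic (or, in the degenerate/quasi-Pfister setting, its anisotropic part is again a multiple of a smaller Pfister-type form). Writing $\phi_E\perp c\psi_E\cong\pi_E$ then forces the Witt and defect indices of $\phi_E$ and $\psi_E$ to coincide, by a dimension count on $\iw$ and $\iql$. This argument should work for semi-singular forms, where the total index is $\iw+\iql$. I expect this part can largely be quoted from \cite[\S 6]{hz}, suitably extended to type $(r,s)$.

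Construction for a given type $(r,s)$ with $2r+s=2^n$ and $r\ge1$. Start from $F_0$ and take $F=F_0(x_1,\dots,x_N,y_1,\dots)$ a rational function field in enough variables. Build $\phi$ as an explicit form of type $(r,s)$ of the shape $\phi\cong[1,x_1]\perp\dots\perp\qf{\dots}$ sitting inside a generic Pfister-type form $\pi$ of dimension $2^{n+1}$ which is a mixture, namely a quadratic Pfister form tensored with a bilinear Pfister form, so that $\ql(\pi)$ has the right dimension $2s$. Define $\psi$ as a complementary subform with $\phi\perp c\psi\cong\pi$ for a well-chosen scalar $c$ (e.g.\ a new variable). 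Anisotropy of $\pi$, and hence of $\phi$ and $\psi$, follows from genericity of the variables via standard valuation or residue arguments over iterated Laurent series or rational function fields.

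The main obstacle is nonsimilarity. I would first reduce it to a statement over a residue field. Suppose $\psi\cong d\phi$. Then $\phi\perp cd\phi\cong\phi\otimes\pff{cd}$ is similar to $\pi$, so $\phi$ would be divisible by the binary form $\pff{cd}$ in a strong sense. I would then show that $\phi$ is not "half of $\pi$" in this way by choosing $\phi$ so that its discriminant/Arf-type invariant, or its quasilinear part's norm field, is incompatible with $\phi\otimes\pff{e}\sim\pi$ for any $e$. A concrete attempt is as follows. Take $c$ to be a transcendental $t$ and pass to $F((t))$. Springer-type decomposition in characteristic $2$ (first and second residue forms) turns a similarity $\psi\cong d\phi$ into relations between residue forms over $F_0(x_i)$. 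Those relations can then be refuted by comparing quasilinear parts: the norm fields $F^2(\ql(\phi))$ versus $F^2(\ql(\psi))$ should be chosen different whenever $s\ge2$. For $s=0$ the result is already \cite{hz}, and the remaining small cases in dimension $8$ are there as well. When $s\ge 2$, arranging different norm fields, or different similarity classes of quasilinear parts, is the cleanest handle, since similar forms have similar quasilinear parts. So I would aim the construction so that $\ql(\phi)$ and $\ql(\psi)$ are nonsimilar totally singular forms, and verify this via the known classification of similarity of quasi-Pfister neighbors.
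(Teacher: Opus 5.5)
\paragraph{There is a genuine gap in both the Vishik-equivalence step and the nonsimilarity step.}

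Your half-neighbor relation $a\phi\perp b\psi\sml\pi$ cannot hold for the types in question. For $s>0$ the left side has quasilinear part $a\ql(\phi)\perp b\ql(\psi)$ of dimension $2s>0$. Every form in $P_{n+1}F$ is nonsingular, and a bilinear Pfister form tensored with a quadratic Pfister form is again nonsingular, so the ``mixture'' with $\dim\ql(\pi)=2s$ you describe does not exist. If you instead allow a singular, quasi-Pfister-type $\pi$, your Vishik-equivalence argument fails, because isotropy of $\pi_E$ no longer splits evenly between the two halves. For example, if $\pff{a,b}$ is anisotropic, then $\qf{1,a}\perp\qf{1+b,ab}\cong\pff{a,b}$. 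Yet over $F(\sqrt{a})$ the first form becomes isotropic while the second stays anisotropic. The notion that works is: $a\phi\dom\pi$ with $\pi\in P_{n+1}F$ nonsingular, and $b\psi\cong(a\phi)^\com_\pi$ the orthogonal complement. Then Lemma~\ref{complement} gives $\pi\perp a\phi\wit b\psi$. Since an isotropic $\pi_E$ is hyperbolic and the types agree, this yields $a\phi_E\cong b\psi_E$.

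Your nonsimilarity strategy is self-defeating. By Lemma~\ref{complement}, $\ql(b\psi)\cong\ql(a\phi)$, so half-neighbors always have similar quasilinear parts. More generally, different norm fields of the quasilinear parts are incompatible with $\phi\vis\psi$. Put $\sigma=\ql(\phi)$, $\tau=\ql(\psi)$ and $E=F(\sqrt{u}\,:\,u\in N_F(\sigma))$. Then $E^2=N_F(\sigma)$, hence $\dim(\sigma_E)_\an=1$. Vishik-equivalence forces $\dim(\tau_E)_\an=1$, so all ratios of values of $\tau$ lie in $N_F(\sigma)$, and by symmetry $N_F(\sigma)=N_F(\tau)$.

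The missing idea is to keep the \emph{same} $\sigma$ in both forms and exploit its similarity factors. A similarity $\lambda\phi\cong\psi$ forces $\lambda\in G_F(\sigma)$. So if $G_F(\sigma)=F^{*2}$, you get $\phi\cong\psi$, contradicting Proposition~\ref{hn-prop}(iii) (which needs $\pi$ anisotropic and $r\geq1$). The paper obtains $G_F(\sigma)=F^{*2}$ by taking $\sigma=\delta\perp x\epsilon$ with $\delta,\epsilon\dom\pff{a_1,\ldots,a_n}$ of distinct odd dimensions and $x\notin D_F(\pff{a_1,\ldots,a_n})$ (Proposition~\ref{totsing-sim-trivial}); this covers $s\geq 4$. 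For $s=2$ this route is impossible: after scaling, $\sigma\cong\qf{1,a}$ and $a\in G_F(\sigma)$. So a second argument is needed. The paper takes $\sigma=a_n\alpha$ with $\alpha$ a quasi-Pfister, so that $\lambda\in G_F(\alpha)=D_F(\alpha_b)=G_F(\alpha_b)$. This lets $\lambda$ be absorbed into $\alpha_b\otimes[1,x]$ and leads to a Witt equivalence between an anisotropic form and a strictly smaller one. Your plan supplies neither step.
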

Our constructions will also invoke the theory of half-neighbors.  A crucial ingredient
will be the study of similarity factors of totally singular forms (i.e., forms
of type $(0,s)$), and in particular the construction of totally singular forms $\sigma$
whose group of similarity factors $G_F(\sigma)$ is trivial, i.e., 
$G_F(\sigma)=F^{*2}$. 

In the next section, we recall basic facts about quadratic forms in characteristic~$2$.
In \S\,3, we introduce Pfister forms and half-neighbors and prove some
facts that will be needed in our constructions.
In \S\,4 we recall some basic properties of totally singular forms and their similarity factors.
Totally singular forms whose groups of similarity factors are trivial will play a central
role in our constructions.
In the final section 5, we present our constructions of nonsimilar half-neighbors which
will thus yield our examples of nonsimilar Vishik-equivalent forms and complete the 
proof of the Main Theorem.

\section{Basic definitions and facts}\label{Sec:Def}
For all undefined terminology and 
facts concerning the theory of quadratic forms that are mentioned without further reference
(in particular
in characteristic $2$), we refer to \cite{ekm}, \cite{hl}, \cite{h}.

Let $\phi$ be a quadratic form (or form, for short) defined on a finite-dimensional vector space
$V$ over a field $F$. The dimension of $\phi$ is defined to be $\dim(\phi)=\dim(V)$.
The symmetric bilinear form associated with $\phi$ is given by
$$b_\phi(x,y)=\phi(x+y)-\phi(x)-\phi(y)\quad (\forall x,y\in V).$$
We have the usual notions of isometry $\phi\cong\psi$ and orthogonal sum $\phi\perp\psi$
of two forms $\phi$ and $\psi$.
$\phi$ is said to be isotropic if there exists an $x\in V\setminus\{ 0\}$ with
$\phi(x)=0$, otherwise $\phi$ is said to be anisotropic. 

The (bilinear) radical of $\phi$ is defined to be
$\rad(\phi)=\rad(b_\phi)=\{ x\in V\,|\,b_\phi(x,y)=0\ \forall y\in V\}$.  
If $\chr(F)\neq 2$,
the restriction of $\phi$ to its radical is the zero form (i.e., $\phi(x)=0$ for all
$x\in\rad(\phi)$).
This need not be the case in characteristic $2$, so in our study  we systematically
include forms that may have a radical of dimension $>0$.  We call $\phi$ nonsingular if
$\dim\rad(\phi)=0$.

 The value set of
$\phi$ is defined to be $D_F(\phi)=\{ \phi(x)\,|\,x\in V\}\cap F^*$ and the group of
similarity factors of $\phi$ is given by $G_F(\phi)=\{ a\in F^*\,|\,\phi\cong a\phi\}$.
Note that $F^{*2}\leq G_F(\phi)$.
If $1\in D_F(\phi)$, then $G_F(\phi)\subseteq D_F(\phi)$, and
$\phi$ is said to be round if $D_F(\phi)=G_F(\phi)$.
For later purposes, we also define $D_F^0(\phi)=D_F(\phi)\cup \{ 0\}$ and
$G_F^0(\phi)=G_F(\phi)\cup \{ 0\}$.

A subspace $W\subset V$ is said to be totally isotropic if $\phi(x)=0$ for all
$x\in W$.  Now a totally isotropic subspace is maximal with respect to inclusion
iff it is maximal with respect to dimension, and the dimension of such a maximal
totally isotropic subspace is called the total isotropy index of $\phi$ and
denoted by $\itt(\phi)$.

From now on, all fields will be of characteristic $2$.   Then nonsingular forms are always
even-dimensional and can be written (up to isometry) as orthogonal sum of
two-dimensional nonsingular forms of shape $[a,b]$, the form given by the 
polynomial $ax^2+xy+by^2$, $a,b\in F$. The form $\HH =[0,0]\cong [0,a]$ (any $a\in F$)
is called hyperbolic plane,
it is the unique (up to isometry) nonsingular isotropic $2$-dimensional form.
A totally singular form of dimension $n\geq 1$ is a form of shape
$\qf{a_1,\ldots,a_n}$, $a_i\in F$,
given by the polynomial $a_1x_1^2+\ldots a_nx_n^2$ (in this case, any basis of the underlying
vector space will be an orthogonal basis).


A quadratic form $\phi$ of dimension $n$
has a decomposition $\phi\cong\phi_0\perp\ql(\phi)$
with $\phi_0$ nonsingular of dimension $2r$, and $\ql(\phi)$ totally singular of
dimension $s$, so $n=2r+s$.  In such a decomposition, $\ql(\phi)$ is uniquely determined 
up to isometry (it is essentially the restriction of $\phi$ to its radical) and it is called
the quasi-linear part of $\phi$.
In particular, the values $r,s$ are uniquely determined.
However, $\phi_0$ is generally not uniquely determined up to isometry if $s>0$.
The pair $(r,s)$ is called the type of $\phi$, and we say that $\phi$ is
\begin{itemize}
	\item nonsingular if $s=0$;
	\item singular if $s>0$;
	\item semi-singular if $r>0$ and $s>0$;
	\item totally singular if $r=0$ and $s>0$;
	\item nondefective if $\ql(\phi)$ is anisotropic;
	\item nondegenerate if $s\leq 1$ and $\ql(\phi)$ is anisotropic.
\end{itemize}
The above decomposition can be refined.  There are unique nonnegative integers
$i,j$ and a uniquely determined (up to isometry) anisotropic form $\phi_\an$
(called the anisotropic part of $\phi$) such that
$$\phi\cong i\times \HH \perp \phi_\an\perp j\times\qf{0}.$$
One defines the Witt index (resp. the defect) of $\phi$ to be
$\iw(\phi)=i$ (resp. $\iql(\phi)=j$).  We then have $\itt(\phi)=\iw(\phi)+\iql(\phi)$
and $\phi$ being nondefective just means that $\iql(\phi)=0$.
The nondefective part of $\phi$ is given by $i\times\HH\perp\phi_\an$ and
it is also uniquely determined up to isometry.  We call $\phi$ and $\psi$ 
Witt-equivalent, denoted by $\phi\wit\psi$, if $\phi_\an\cong\psi_\an$.

\begin{remark}\label{qperpq}
Using the relations $[a,b]\perp [a,b]\cong 2\times\HH$ and $\qf{a,a}\cong\qf{a,0}$,
it is now easy to see that with $\phi\cong\phi_0\perp\ql(\phi)$
as above, we get $\phi\perp\phi\wit\ql(\phi)$.  We will use this later on.
\end{remark}

Two forms $\phi$ and $\psi$ are similar if there exists an $a\in F^*$ such that
$\phi\cong a\psi$, in which case we write $\phi\sml\psi$.
If $E/F$ is a field extension, then $\phi_E=\phi\otimes E$ is the
form over $E$ obtained by scalar extension.

We say that the two forms $\phi$ and $\psi$ with $\dim\phi=\dim\psi$
are Vishik-equivalent, denoted by
$\phi\vis\psi$, if, over every field
extension $E/F$, we have 
$$\iw(\phi_E)=\iw(\psi_E)\quad\mbox{and}\quad \iql(\phi_E)=\iql(\psi_E).$$
It is known that if $\phi\vis\psi$, then the types of $\phi$ and
$\psi$ are the same, and that if, say, $\phi$ is nondefective, then
$\phi\vis\psi$ iff $\itt(\phi_E)=\itt(\psi_E)$ for all field extensions $E/F$
(\cite[Proposition 2.5, Lemma 2.8]{hz}).

 We say that $\psi$ is dominated by $\phi$, denoted by $\psi\dom\varphi$,
 if for some subspace $U$ of the underlying vector space $V$ of $\phi$,
 one has $\varphi|_U\cong\psi$.   In this situation, we put
 $U^\perp=\{ x\in V\,|\,b_\phi(x,y)=0\ \forall y\in U\}$ and we call a form
 $\eta$ a  
 complement of $\psi$ in $\phi$ if $\eta\cong \phi|_{U^\perp}$.
 Such a complement is unique up to isometry and will also be denoted by
 $\psi^\com_\phi$. 
 
 The following lemma is folklore, we include a proof for the reader's convenience.
 \begin{lemma}\label{dom-isotropic}
 Let $\phi$ and $\psi$ be forms over $F$ with $\psi\dom\phi$.
 If $\dim\psi+\itt(\phi)>\dim\phi$, then $\psi$ is isotropic.
 \end{lemma}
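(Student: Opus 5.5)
The plan is a dimension count inside the underlying vector space $V$ of $\phi$. Since $\psi\dom\phi$, we fix a subspace $U\subseteq V$ with $\phi|_U\cong\psi$, so $\dim U=\dim\psi$. By definition of the total isotropy index, we also fix a totally isotropic subspace $W\subseteq V$ with $\dim W=\itt(\phi)$, i.e.\ $\phi(x)=0$ for all $x\in W$.

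The key step is the dimension formula for the intersection of two subspaces:
$$\dim(U\cap W)\geq \dim U+\dim W-\dim V=\dim\psi+\itt(\phi)-\dim\phi>0 .$$
This holds because $\dim(U+W)\leq\dim V$. Hence there exists a vector $x\in U\cap W$ with $x\neq 0$.

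Since $x\in W$, we have $\phi(x)=0$. Since $x\in U$, this says $\phi|_U(x)=0$ for a nonzero vector $x$ of $U$. So $\phi|_U$ is isotropic, and since $\phi|_U\cong\psi$, the form $\psi$ is isotropic.

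There is no real obstacle here. The only points to check are that ``isotropic'' is meant in the sense defined above (a nonzero vector with $\phi(x)=0$), so that vectors in the radical of $\phi$ are allowed, and that this notion is preserved under isometry. Both follow directly from the definitions in this section. No characteristic assumption is used.
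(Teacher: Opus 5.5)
Your proof is correct and matches the paper's argument: realize $\psi$ as $\phi|_U$, take a totally isotropic subspace $W$ with $\dim W=\itt(\phi)$, and use $\dim U+\dim W>\dim V$ to get a nonzero $x\in U\cap W$ with $\psi(x)=\phi(x)=0$.
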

 \begin{proof}  We may assume that $\phi$ is defined on the vector space $V$ and $\psi=\phi|_U$
 is the restriction of $\phi$ to a subspace $U\subset V$.  Let $W$ be a maximal totally
 isotropic subspace of $\phi$, so in particular $\dim W=\itt(\phi)$.  Then
 $$\dim U+\dim W=\dim\psi+\itt(\phi)>\dim\phi =\dim V,$$
 and therefore, $U$ and $W$ intersect nontrivially.  Thus, there exists some
 $0\neq x\in U\cap W$.  In particular, $\psi(x)=\phi(x)=0$
 and $\psi$ is isotropic.
 \end{proof}   
 
 Note that if $\phi$ is nonsingular and $\psi\dom\phi$, then
 $\dim\psi + \dim\psi^\com_\phi =\dim\phi$.  More precisely, we have the following
 (see \cite[Lemma 3.1, Remark 3.2, Lemma 3.7]{hl}):
 \begin{lemma}\label{complement}
 Let $\psi$ be a form over $F$ of type $(r,s)$ and write 
 $\psi\cong\psi_0\perp\qf{a_1,\ldots,a_s}$ with $\psi_0$
 nonsingular of dimension $2r$ and $a_i\in F$.
 Let $\phi$ be a nonsingular form over $F$ with $\psi\dom\phi$.
 Then there exist $c_1,\ldots,c_s\in F$ and a nonsingular form $\psi_1$ over $F$
 such that
 $$\phi\cong\psi_0\perp [a_1,c_1]\perp\ldots\perp [a_s,c_s]\perp \psi_1.$$
 In this situation, we have $\psi^\com_\phi\cong \psi_1\perp\qf{a_1,\ldots,a_s}$.
 In particular,
 $$\phi\perp\psi\cong (s+\dim\psi_0)\times\HH\perp\psi^\com_\phi$$
 and thus $\phi\perp\psi\wit\psi^\com_\phi$.
 Furthermore,
 $\typ(\psi^\com_\phi)=(\frac{\dim\phi}{2}-r-s,s)$ and (by symmetry)
 $$\bigl(\psi^\com_\phi\bigr)^\com_\phi\cong\psi.$$
 \end{lemma}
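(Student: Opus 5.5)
Everything takes place in the underlying vector space $V$ of $\phi$, and the plan is to build an adapted basis of $V$ step by step. Write $\psi=\phi|_U$. Since $\psi\cong\psi_0\perp\qf{a_1,\ldots,a_s}$, we can split $U=U_0\oplus R$, where $\phi|_{U_0}\cong\psi_0$ is nonsingular. The subspace $R$ is $\rad(\psi)=U\cap U^\perp$, and it has a basis $e_1,\ldots,e_s$ with $\phi(e_i)=a_i$.

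\textbf{Step 1: split off $U_0$.} Because $b_\phi|_{U_0}$ is nondegenerate, $V=U_0\perp U_0^{\perp}$. The orthogonal complement $U_0^\perp$ is again nonsingular, since $\phi$ is, and it contains $R$.

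\textbf{Step 2: pair off the radical.} Inside $V'=U_0^\perp$ the vectors $e_i$ are linearly independent. Since $b_\phi|_{V'}$ is nondegenerate, we can choose $f_1,\ldots,f_s\in V'$ with $b(e_i,f_j)=\delta_{ij}$.

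\textbf{Step 3: adjust the partners.} We want the $f_j$ to satisfy $b(f_i,f_j)=0$ for $i\neq j$. Replace each $f_j$ by $f_j+\sum_{k<j} b(f_j,f_k)e_k$. This is the standard symplectic-type induction. It works because the $e$'s are pairwise orthogonal (they lie in $\rad\psi$), so the corrections do not disturb the relations $b(e_i,f_j)=\delta_{ij}$, and a direct check shows the $f$'s become mutually orthogonal.

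\textbf{Step 4: read off the form.} Each plane $H_i=\langle e_i,f_i\rangle$ now has $\phi|_{H_i}\cong[a_i,c_i]$ with $c_i=\phi(f_i)$. The planes are pairwise orthogonal, so $W=\bigoplus H_i$ is nonsingular. Setting $\psi_1=\phi|_{W^\perp\cap V'}$ gives the claimed decomposition of $\phi$.

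\textbf{Step 5: identify the complement.} We have $U^\perp=U_0^\perp\cap R^\perp$. An element of $W\oplus(W^\perp\cap V')$ is orthogonal to all $e_i$ exactly when its $f$-coefficients vanish. Hence $U^\perp=R\oplus(W^\perp\cap V')$, and therefore $\psi^\com_\phi\cong\psi_1\perp\qf{a_1,\ldots,a_s}$. Counting dimensions, $\psi_1$ has dimension $\dim\phi-2r-2s$, which gives the stated type $(\frac{\dim\phi}{2}-r-s,s)$.

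\textbf{Step 6: Witt equivalence.} Compute
$$\phi\perp\psi\cong \psi_0\perp\psi_0\perp\bigl(\bp_i [a_i,c_i]\perp\qf{a_i}\bigr)\perp\psi_1 .$$
Here $\psi_0\perp\psi_0\cong(\dim\psi_0)\times\HH$ by Remark~\ref{qperpq}. For each $i$ we use $[a,c]\perp\qf{a}\cong\HH\perp\qf{a}$: the vector $e+e'$, where $e'$ spans $\qf{a}$, has value $0$ and pairs to $1$ with $f$. This gives $(s+\dim\psi_0)\times\HH\perp\psi^\com_\phi$, and Witt equivalence follows.

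\textbf{Step 7: double complement.} First, $U^{\perp\perp}=U$ by nondegeneracy of $b_\phi$. Then $\psi^\com_\phi=\phi|_{U^\perp}$ has complement $\phi|_{U^{\perp\perp}}=\psi$.

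The only delicate point is the orthogonalization of the $f_j$ in Step 3, together with the check in Step 5 that $U^\perp$ is exactly $R\oplus(W^\perp\cap V')$. Both are routine linear algebra once the dual basis has been chosen.
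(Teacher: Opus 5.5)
Your proof is correct. It differs from the paper only in that the paper gives no argument of its own here. It cites Hoffmann--Laghribi \cite[Lemma 3.1, Remark 3.2, Lemma 3.7]{hl} and adds just the observation $[a_i,c_i]\perp\qf{a_i}\cong\HH\perp\qf{a_i}$ for the Witt equivalence, which is your Step 6.

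What you supply instead is a self-contained linear-algebra proof:
\begin{itemize}
\item split off the nonsingular part $U_0$;
\item pick a dual system $f_j$ to the radical basis $e_i$ inside $U_0^\perp$ and orthogonalize it;
\item identify $U^\perp=R\oplus(W^\perp\cap U_0^\perp)$;
\item use $U^{\perp\perp}=U$.
\end{itemize}
Your correction formula in Step 3 works as a one-shot definition using the original $f_k$. A direct expansion gives $b(f_j',f_m')=2b(f_j,f_m)=0$ for $m<j$, because $b(e_k,f_m)=\delta_{km}$ and the $e_k$ are mutually orthogonal.

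The citation keeps the paper short. Your version shows the lemma is elementary, and it makes visible that nothing needs $\qf{a_1,\ldots,a_s}$ to be anisotropic, so some $a_i$ may be $0$.

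You use without proof that $\psi^\com_\phi$ is well defined up to isometry, independently of the subspace $U$. Step 7 needs this, and so does reading ``in this situation'' for an arbitrary decomposition $\phi\cong\psi_0\perp[a_1,c_1]\perp\ldots\perp[a_s,c_s]\perp\psi_1$; there your Step 5 computation applies to the obvious subspace. The paper also asserts this fact before the lemma, so you may use it. Your Step 6 in fact proves it. The form $(s+2r)\times\HH\perp\phi|_{U^\perp}\cong\phi\perp\psi$ does not depend on $U$. Uniqueness of $i$, $j$ and $\eta_\an$ in $\eta\cong i\times\HH\perp\eta_\an\perp j\times\qf{0}$ then lets you cancel the hyperbolic planes.
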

 
(To get the Witt equivalence, note that $[a_i,c_i]\perp\qf{a_i}\cong\HH\perp\qf{a_i}$.) 
In the situation of the above lemma, we say that $\varphi$ is a
 \emph{nonsingular completion} of $\psi$ if 
 $\phi$ of type $(r+s,0)$, i.e., $\dim\psi_1=0$.  So the nonsingular
 completions of $\psi$ will be exactly the forms
 of shape $\psi_0\perp [a_1,c_1]\perp\ldots\perp [a_s,c_s]$
 for any choice of the $c_i$'s. Choosing all $c_i=0$, we see for example,
 that $\psi_0\perp s\times\HH$ is a nonsingular completion of $\psi$.
 
\section{Pfister forms and half-neighbors}
Here, a bilinear form is always understood to be symmetric
and we still assume that $F$ is a field of characteristic $2$.
Let $b:V\times V\to F$ be a bilinear form defined on an
$F$-vector space $V$ with $\dim V=n<\infty$, and let $\gr_\BB(b)\in M_n(F)$
be the Gram matrix of $b$
with respect to a basis $\BB$ of $V$. We say that $\BB$ is an orthogonal basis
if $\gr_\BB(b)$ is a diagonal matrix, say, 
$\gr_\BB(b)=\diag(a_1,\ldots, a_n)$.  In this case, we also write
$b\cong\qf{a_1,\ldots,a_n}_b$. $b$ is said to be nondegenerate
iff $\det\gr_\BB(b)\neq 0$, which is equivalent to
$\rad(b)=\{ x\in V\,|\,b(x,y)=0\ \forall y\in V\}=0$.  If $b\cong\qf{a_1,\ldots,a_n}_b$,
then $b$ is nondegenerate if $a_i\in F^*$ for all $1\leq i\leq n$.

We define the $0$-fold bilinear Pfister form
as $\qf{1}_b$, and for an integer $m>0$,  an $m$-fold bilinear Pfister form is a nondegenerate
$2^m$-dimensional bilinear form obtained as an $m$-fold
tensor product $\qf{1,a_1}_b\otimes\ldots\otimes\qf{1,a_m}_b$ with $a_i\in F^*$,
we write $\pff{a_1,\ldots,a_n}_b$ for short.  By multiplying out, we obtain
$$\pff{a_1,\ldots,a_n}_b=\bp_{I\subseteq\{1,\ldots ,n\}}\qf{a_I}_b\quad\mbox{where
$a_I=\prod_{k\in I}a_k$,}$$
for example, $\pff{a_1,a_2}_b\cong\qf{1,a_1,a_2,a_1a_2}_b$.

If $b$ is a bilinear form defined on the $F$-vector space $V$, and $q$ is a quadratic form
defined on the $F$-vector space $W$, then one obtains a quadratic form $b\otimes q$
defined on $V\otimes_FW$ uniquely determined by the property that
$b\otimes q(x\otimes y)=b(x,x)q(y)$ for all $x\in V$, $y\in W$.  If
$b\cong\qf{a_1,\ldots,a_n}_b$ is diagonal, then
$$b\otimes q\cong a_1q\perp \ldots\perp a_nq$$.

For an integer $m\geq 0$, an $(m+1)$-fold quadratic Pfister form (or $(m+1)$-Pfister for short)
is a tensor product of an $m$-fold bilinear Pfister form $\pff{a_1,\ldots,a_m}_b$, $a_i\in F^*$,
and a binary quadratic form $[1,c]$, $c\in F$, which we denote by
$$\pff{a_1,\ldots,a_m}_b\otimes [1,c]=\qpf{a_1,\ldots,a_m,c}\cong 
\bp_{I\subseteq\{1,\ldots ,n\}}a_I[1,c].$$
An $(m+1)$-Pfister $\pi$ is nonsingular and has the following important properties:
\begin{itemize}
\item $\pi$ is isotropic $\implylr$ $\pi$ is hyperbolic;
\item $G_F(\pi)=D_F(\pi)$, i.e., $\pi$ is round.
\end{itemize} 
The set of all forms over $F$ that are isometric (resp. similar)
to $(m+1)$-Pfisters is denoted by
$P_{m+1}F$ (resp. $GP_{m+1}F$).

Note that $2^m\times\HH\in P_{m+1}F$:  in the above definition of a Pfister form,
just choose any $a_i\in F^*$ and $c=0$, so $[1,c]\cong\HH$.

\begin{definition}  Two forms $\phi$ and $\psi$ over $F$ are said to be 
{\em half-neighbors} (of each other) if there exist an integer $m\geq 0$, a form
$\pi\in P_{m+1}F$, $a,b\in F^*$ such that
\begin{itemize}
\item $\dim\phi=\dim\psi=2^m$, and
\item $a\phi\dom\pi$, and
\item $b\psi\cong (a\phi)^\com_\pi$.
\end{itemize}
We then also say that $\phi$ and $\psi$ are half-neighbors with respect to the
Pfister form $\pi$
\end{definition}

\begin{remark}
(i) Every form $\phi$ of dimension $2^m$ is a half-neighbor of itself.  Indeed, write
$\phi\cong\phi_0\perp \qf{a_1,\ldots,a_s}$ with $\phi_0$ nonsingular of dimension
$2r$ and note that $\phi_0\perp\phi_0\cong 2r\times\HH$.  Thus,
$$\phi\dom \pi=\phi_0\perp [a_1,0]\perp\ldots\perp [a_s,0]\perp\phi_0\cong 2^m\times\HH\in P_{m+1}F$$
and $\phi^\com_\pi\cong \phi\wit\pi\perp\phi$.

\medskip

\noindent (ii)  If $\phi$ and $\psi$ are half-neighbors, the Pfister form
used to obtain this relation may not be unique.  As an example, assume that $\qpf{a,c}$ 
is anisotropic.  Then $\qpf{a,c}\cong [1,c]\perp a[1,c]$, so $[1,c]$ is a half-neighbor
of itself with respect to the anisotropic Pfister form  $\qpf{a,c}$.  But by
the above, $[1,c]$ is also a half-neighbor of itself with respect to the
hyperbolic $2$-Pfister.
\end{remark}

\begin{proposition}\label{hn-prop}
Let $m\geq 0$ be an integer. Let  $\pi\in P_{m+1}F$ and let $\phi$ and $\psi$
be forms over $F$ of dimension $2^m$ such that
$\phi\dom\pi$ and $\psi\cong\phi^\com_\pi$ with $\typ(\phi)=(r,s)$. Then:
\begin{itemize}
\item[(i)] $\typ(\phi)=\typ(\psi)=(r,s)$.
\item[(ii)] If $\pi$ is isotropic then $\phi\cong\psi$.
\item[(iii)] Suppose $\pi$ is anisotropic.  Then $\phi\cong\psi$
iff $r=0$.
\item[(iv)] $\phi\vis\psi$.
\end{itemize}
\end{proposition}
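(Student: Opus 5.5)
The plan is to derive everything from Lemma~\ref{complement} applied to $\phi\dom\pi$, which gives
$$\pi\perp\phi\cong (2r+s)\times\HH\perp\psi=2^m\times\HH\perp\psi$$
and $\typ(\psi)=(2^m-r-s,s)$. Since $2r+s=2^m$, we have $2^m-r-s=r$. This proves (i).

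For (ii), $\pi$ is isotropic and hence hyperbolic, so $\pi\cong 2^m\times\HH$. We then get $\phi\perp 2^m\times\HH\cong 2^m\times\HH\perp\psi$. We compare the uniquely determined invariants $\iw$, $(\cdot)_\an$ and $\iql$ on both sides. The left side has Witt index $\iw(\phi)+2^m$, anisotropic part $\phi_\an$ and defect $\iql(\phi)$. The right side gives the same three invariants for $\psi$. Hence $\iw(\phi)=\iw(\psi)$, $\phi_\an\cong\psi_\an$ and $\iql(\phi)=\iql(\psi)$, and so $\phi\cong\psi$ by the decomposition $\phi\cong i\times\HH\perp\phi_\an\perp j\times\qf{0}$.

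For (iii), assume $\pi$ is anisotropic. Then $\phi$ is anisotropic, being dominated by $\pi$, and in particular $\sigma:=\ql(\phi)$ is anisotropic.

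If $r=0$, then $s=2^m$ and Lemma~\ref{complement} forces $\psi_1=0$. Hence $\psi\cong\qf{a_1,\ldots,a_s}\cong\phi$.

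Conversely, suppose $r\geq1$ and $\phi\cong\psi$. Adding $\phi$ to the displayed isometry gives
$$\pi\perp\phi\perp\phi\cong 2^m\times\HH\perp\psi\perp\phi\cong 2^m\times\HH\perp\phi\perp\phi,$$
which is Witt-equivalent to $\sigma$ by Remark~\ref{qperpq}. On the other hand, writing $\phi\cong\phi_0\perp\sigma$ and using $\sigma\perp\sigma\cong\sigma\perp s\times\qf{0}$, we get
$$\pi\perp\phi\perp\phi\cong 2r\times\HH\perp(\pi\perp\sigma)\perp s\times\qf{0}.$$
Hence $(\pi\perp\sigma)_\an\cong\sigma$. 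Now $\pi\perp\sigma$ is nondefective, since its quasi-linear part $\sigma$ is anisotropic. Therefore
$$\iw(\pi\perp\sigma)=\frac{2^{m+1}+s-s}{2}=2^m.$$
However, a totally isotropic subspace of $\pi\perp\sigma$ of dimension $>s$ would meet the underlying space of the anisotropic form $\pi$ nontrivially, because that space has codimension $s$. This is the same dimension count as in Lemma~\ref{dom-isotropic}. So $2^m\leq s=2^m-2r<2^m$, a contradiction.

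For (iv), let $E/F$ be any field extension. Domination and complements are compatible with scalar extension, since $(U\otimes E)^\perp=U^\perp\otimes E$ by linear algebra. Therefore $\phi_E\dom\pi_E$ and $\psi_E\cong(\phi_E)^\com_{\pi_E}$, with $\pi_E\in P_{m+1}E$. If $\pi_E$ is isotropic, then (ii) gives $\phi_E\cong\psi_E$. If $\pi_E$ is anisotropic, then both $\phi_E$ and $\psi_E$ are dominated by $\pi_E$ and hence anisotropic, so all their indices vanish. In either case the indices of $\phi_E$ and $\psi_E$ agree, which gives $\phi\vis\psi$.

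The main obstacle is the direction $r\geq1\Rightarrow\phi\not\cong\psi$ in (iii). The delicate points there are computing $(\pi\perp\sigma)_\an$ correctly and then bounding the Witt index of $\pi\perp\sigma$ by $s$.
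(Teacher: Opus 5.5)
Your proof is correct and follows essentially the paper's route: Lemma~\ref{complement} for (i), cancelling the hyperbolic summand for (ii), deriving $(\pi\perp\sigma)_{\an}\cong\sigma$ and hence $\iw(\pi\perp\sigma)=2^m>s$ to contradict the anisotropy of $\pi$ via the dimension count of Lemma~\ref{dom-isotropic} for (iii), and the dichotomy ``$\pi_E$ anisotropic or hyperbolic'' for (iv). The differences are only cosmetic: you work with explicit isometries and compare all three invariants $\iw$, $(\cdot)_{\an}$, $\iql$ where the paper uses Witt equivalence together with equality of types, and you spell out the compatibility of complements with scalar extension, which the paper leaves implicit.
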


\begin{proof} 
(i) We have $\dim(\phi)=2r+s=2^m=\frac{\dim\pi}{2}$ and by Lemma \ref{complement}
we get 
$$\typ(\psi)=\typ(\phi^\com_\pi)=(\textstyle{\frac{\dim\pi}{2}}-r-s,s)=(r,s)=\typ(\phi).$$

\medskip

\noindent (ii) 
By Lemma \ref{complement}, we have $\pi\perp\phi\wit\psi$.
If $\pi$ is isotropic and hence hyperbolic, we get 
that $\phi\wit\psi$, and since the types are the same, this immediately implies
$\phi\cong\psi$.

\medskip

\noindent(iii)  Write $\phi\cong\phi_0\perp\sigma$ with $\phi_0$ nonsingular of
dimension $2r$, and $\sigma$ totally singular of dimension $s$.
Then, by Lemma \ref{complement},
we can write
$\psi\cong\psi_0\perp\sigma$ with $\psi_0$ nonsingular of
dimension $2r$.  Hence, if $r=0$, we have 
$\phi\cong\sigma\cong\psi$. 

So let $r>0$ and thus $\dim\sigma<2^m$, and suppose $\phi\cong\psi$. By Remark \ref{qperpq}
we have $\phi\perp\phi\wit\sigma$, and with $\pi\perp\phi\wit\psi\cong\phi$, we get
$$\pi\perp\sigma\wit\pi\perp\phi\perp\phi\wit\phi\perp\phi\wit\sigma.$$
Comparing types and Witt indices, we get
$$\itt(\pi\perp\sigma)\geq \iw(\pi\perp\sigma)=2^m>\dim\sigma=\dim(\pi\perp\sigma)-\dim(\pi),$$
hence $\pi$ is isotropic by Lemma \ref{dom-isotropic}, a contradiction.

\medskip

\noindent (iv) (See also \cite[Lemma 6.2]{hz}.)
If $E/F$ is a field extension and $\pi_E$ is anisotropic,
then both $\phi$ and $\psi$ 
are anisotropic as well as $\phi_E,\psi_E\dom \pi$.
If $\pi_E$ is isotropic and hence hyperbolic, then 
by (ii), we have $\phi_E\cong\psi_E$. So in any case, we have
$(\iql(\phi_E),\iw(\phi_E))=(\iql(\psi_E),\iw(\psi_E))$.
\end{proof}

The following corollary contains the basic idea behind one of our constructions of
certain semi-singular Vishik-equivalent forms that are not similar.
\begin{corollary}\label{nonsim-hn}
Let $\phi$ and $\psi$ be semi-singular half-neighbors of dimension $2^m$ with respect to
some anisotropic Pfister form $\pi\in P_{m+1}F$.  
Suppose that $G_F(\ql(\phi))=F^{*2}$.
Then $\phi$ and $\psi$ are Vishik-equivalent
but not similar.
\end{corollary}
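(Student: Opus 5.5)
The plan is to get the two assertions separately from Proposition~\ref{hn-prop}, applied to a suitably scaled copy of $\phi$. By the definition of half-neighbors there are $a,b\in F^*$ such that $a\phi\dom\pi$ and $b\psi\cong(a\phi)^\com_\pi$. Put $\phi'=a\phi$ and $\psi'=b\psi$. Then $\phi'\dom\pi$ and $\psi'\cong\phi'^\com_\pi$, so Proposition~\ref{hn-prop} applies to the pair $(\phi',\psi')$ and the anisotropic $\pi$.

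For Vishik-equivalence, Proposition~\ref{hn-prop}(iv) gives $\phi'\vis\psi'$. Multiplying a form by a nonzero scalar does not change its Witt index or its defect over any extension $E/F$. Hence $\phi\vis\phi'$ and $\psi\vis\psi'$, and therefore $\phi\vis\psi$.

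For non-similarity, argue by contradiction and suppose $\psi\cong c\phi$ for some $c\in F^*$. Then $\psi'\cong b\psi\cong bc\phi\cong d\phi'$ with $d=bca^{-1}$.

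1. Write $\sigma=\ql(\phi')\cong a\,\ql(\phi)$. By Lemma~\ref{complement}, the complement $\psi'\cong\phi'^\com_\pi$ has the same quasi-linear part $\sigma$, as in the proof of Proposition~\ref{hn-prop}(iii).
2. On the other hand $\ql(d\phi')\cong d\sigma$. Since the quasi-linear part is unique up to isometry, $d\sigma\cong\sigma$, so $d\in G_F(\sigma)$.
3. Similar forms have the same group of similarity factors. Hence $G_F(\sigma)=G_F(a\,\ql(\phi))=G_F(\ql(\phi))=F^{*2}$, and so $d=e^2$ for some $e\in F^*$.
4. Then $d\phi'\cong\phi'$ (scale the underlying vectors by $e$). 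This gives $\phi'\cong\psi'=\phi'^\com_\pi$.
5. Since $\phi$ is semi-singular, $\typ(\phi')=(r,s)$ with $r>0$. Because $\pi$ is anisotropic, Proposition~\ref{hn-prop}(iii) says $\phi'\not\cong\psi'$, which is a contradiction.

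No step is a serious obstacle. The only real content is steps 2 and 3: identifying the quasi-linear parts of $\psi'$ and $d\phi'$, which uses the uniqueness of $\ql$ and the form of the complement in Lemma~\ref{complement}. This is exactly where the hypothesis $G_F(\ql(\phi))=F^{*2}$ is used, to reduce similarity to isometry.
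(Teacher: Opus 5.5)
Your proposal is correct and follows the paper's argument. Vishik-equivalence comes from Proposition~\ref{hn-prop}(iv), and non-similarity comes from using uniqueness of the quasi-linear part to force the similarity factor into $G_F(\ql(\phi))=F^{*2}$, which contradicts Proposition~\ref{hn-prop}(iii). The only difference is that you spell out the scaling reduction (Witt index, defect and $G_F$ are unchanged by scaling), which the paper compresses into ``after scaling, we may assume''.
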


\begin{proof} 
The fact that $\phi$ and $\psi$ are Vishik-equivalent has been shown in  
Lemma \ref{hn-prop}(iv).  After scaling, we may assume that
$\phi\dom\pi$ and $\psi\cong \phi^\com_\pi$.

Write $\phi\cong\phi_0\perp\sigma$ with $\phi_0$ nonsingular
of dimension $2r>0$ and $\ql(\phi)=\sigma$ totally singular
of dimension $s>0$.  Then by Lemma \ref{complement}, 
$\psi\cong\phi^\com_\pi\cong\psi_0\perp\sigma$ with $\psi_0$ nonsingular of dimension $2r$.

Suppose $\phi\sml\psi$, then there exists some $\lambda\in F^*$ such that
$$\lambda\phi\cong\lambda\phi_0\perp\lambda\sigma\cong \psi\cong\psi_0\perp\sigma.$$
The uniqueness (up to isometry) of the quasi-linear part forces
$\lambda\sigma\cong\sigma$, so $\lambda\in G_F(\sigma)$.  By assumption,
this implies $\lambda\in F^{*2}$ and thus 
$$\phi\cong\lambda\phi\cong\psi,$$
a contradiction to Proposition \ref{hn-prop}(iii).
\end{proof}

\begin{remark} We already know from the introduction
that Vishik-equivalent forms of dimension $\leq 5$ are similar, so to be able
to construct a situation as in this corollary, we must have $m\geq 3$. 

A different way of seeing this is as follows. First note
that semi-singularity implies that $\dim\phi=2r+s=2^m$ with $r,s>0$, and therefore
$m\geq 2$.  Now if $m=2$, we would have $r=1$ and $s=2$, so, after scaling
(which doesn't affect the group of similarity factors) we may assume
that $\ql(\phi)\cong\qf{1,a}$ with $a\in F^*$.  The anisotropy implies
$a\notin F^{*2}$.  In the next section, we will see (Lemma  \ref{totsing-sim}) that this implies
$F^{*2}\subsetneq G_F(\sigma)=F^2(a)^*$ (where $F^2(a)$ is the field extension
generated by $a$ over the field $F^2$ inside $F$).
Thus, to have a situation as in the above corollary with 
$G_F(\sigma)=F^{*2}$, we must have $m\geq 3$ and $s\geq 4$.
\end{remark}

\section{Totally singular forms and their similarity factors}
Recall from Section \ref{Sec:Def} that a totally singular form $\sigma$ over
$F$ has a decomposition $\sigma\cong\sigma_{\an}\perp i\times\qf{0}$ where
$\sigma_{\an}$ is anisotropic, totally singular and uniquely determined up to isometry, and
$i=\iql(\sigma)$ is the defect. 
Now suppose $\dim\sigma=n\geq 1$ and write $\sigma\cong\qf{a_1,\ldots, a_n}$.
Then $D_F^0(\sigma)=\sum_{i=1}^nF^2a_i$ is a finite-dimensional $F^2$-subspace
inside the $F^2$-vector space $F$.  In fact, there is a one-one correspondence
$$\begin{array}{rcl}
\left\{ \begin{array}{l}\mbox{isometry classes of anisotropic}\\
\mbox{totally singular forms over $F$}\end{array}\right\}
& \longrightarrow & \left\{\begin{array}{l}\mbox{finite-dimensional}\\
\mbox{$F^2$-subspaces of $F$}\end{array}\right\}\\[1ex]
\sigma & \longmapsto & D_F^0(\sigma)
\end{array}$$
(see \cite[Proposition~8.1]{hl}, \cite[p.~56]{ekm}).
If $\phi\cong\qf{a_1,\ldots,a_n}$ and $\dim\phi_{\an}=m\leq n$, then one can find
indices $1\leq i_1<i_2<\ldots<i_m\leq n$ such that
$\phi_{\an}\cong\qf{a_{i_1},\ldots, a_{i_m}}$ by choosing a basis among the $a_i$'s
of the $F^2$-vector space $D_F^0(\phi)$ that is spanned by $a_1,\ldots,a_n$.
In particular, $\dim\phi_{\an}=\dim_{F^2}D_F^0(\phi)$.
With these observations, the following is now rather obvious.
\begin{lemma}\label{totsing-ani}
Let $\sigma$ and $\tau$ be totally singular forms over $F$.  Then there exist $\tau'\dom\tau$
and $\sigma'\dom\sigma$ 
such that
$$(\sigma\perp\tau)_{\an}\cong \sigma_{\an}\perp\tau'\cong\sigma'\perp\tau_{an}.$$
In particular,
$\dim(\sigma\perp\tau)_{\an}\geq\max\{\dim\sigma_{\an},\dim\tau_{\an}\}$.

Furthermore, $\sigma\perp\sigma\wit\sigma$.
\end{lemma}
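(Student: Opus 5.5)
The plan is to translate everything into the correspondence between anisotropic totally singular forms and finite-dimensional $F^2$-subspaces of $F$ recalled just before the statement. Write $\sigma\cong\qf{a_1,\ldots,a_n}$ and $\tau\cong\qf{b_1,\ldots,b_k}$. Then $\sigma\perp\tau\cong\qf{a_1,\ldots,a_n,b_1,\ldots,b_k}$ and
$$D_F^0(\sigma\perp\tau)=D_F^0(\sigma)+D_F^0(\tau),$$
the sum of $F^2$-subspaces of $F$. By the observation above, $(\sigma\perp\tau)_{\an}$ is diagonal with entries forming any $F^2$-basis of this sum that is chosen among the spanning entries.

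First, choose indices $i_1<\ldots<i_p$ with $a_{i_1},\ldots,a_{i_p}$ an $F^2$-basis of $D_F^0(\sigma)$. By the same observation, $\sigma_{\an}\cong\qf{a_{i_1},\ldots,a_{i_p}}$. Next, extend this to a basis of $D_F^0(\sigma)+D_F^0(\tau)$ by adding suitable $b_{j_1},\ldots,b_{j_q}$ from the spanning set $\{b_j\}$; this is the standard basis extension from a spanning set in linear algebra. Put $\tau'=\qf{b_{j_1},\ldots,b_{j_q}}$. This form is dominated by $\tau$, since it is the restriction of $\tau$ to the span of the corresponding basis vectors, and every basis of $\tau$ is orthogonal. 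The diagonal form on the combined basis is anisotropic, because its entries are $F^2$-linearly independent: $\sum x_i^2c_i=0$ forces all $x_i=0$. It has value space $D_F^0(\sigma\perp\tau)$, so the one-one correspondence gives $(\sigma\perp\tau)_{\an}\cong\sigma_{\an}\perp\tau'$. The other isometry, with $\sigma'\dom\sigma$, follows by symmetry. The dimension inequality follows immediately, since $\dim(\sigma\perp\tau)_{\an}=\dim_{F^2}(D_F^0(\sigma)+D_F^0(\tau))$ is at least each summand's dimension.

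For $\sigma\perp\sigma\wit\sigma$, note that $D_F^0(\sigma\perp\sigma)=D_F^0(\sigma)$. The correspondence then gives $(\sigma\perp\sigma)_{\an}\cong\sigma_{\an}$, which by definition means the two forms are Witt-equivalent. The only point needing care is justifying that the anisotropic part corresponds to the value space. This is exactly the cited correspondence together with $D_F^0(\sigma)=D_F^0(\sigma_{\an})$, which holds because adding $\qf{0}$ summands does not change the value set. I do not expect a real obstacle here.
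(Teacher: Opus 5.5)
Your proposal is correct and follows essentially the paper's route. The paper gives no separate proof and calls the lemma ``rather obvious'' from the correspondence between anisotropic totally singular forms and finite-dimensional $F^2$-subspaces of $F$, together with the fact that the anisotropic part is obtained by choosing an $F^2$-basis of $D_F^0$ among the diagonal entries; your basis-extension argument spells out exactly this.
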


Let $n\geq 1$ be an integer.  An $n$-fold quasi-Pfister form (or $n$-quasi-Pfister for short)
is a totally
singular form isometric to 
$$\pff{a_1,a_2,\ldots,a_n}:=\qf{1,a_1}\otimes\qf{1,a_2}\otimes\ldots\otimes\qf{1,a_n}$$
for some $a_i\in F$.  We define the $0$-quasi-Pfister to be the form $\qf{1}$.
We have that
$D_F^0(\pff{a_1,a_2,\ldots,a_n})=F^2(a_1,\ldots,a_n)$ is a finite field extension of $F^2$
inside $F$.  Indeed, there is a one-one correspondence
$$\begin{array}{rcl}
\left\{ \begin{array}{l}\mbox{isometry classes of anisotropic}\\
\mbox{quasi-Pfisters over $F$}\end{array}\right\}
& \longrightarrow & 
\left\{ \begin{array}{l}\mbox{finite extensions of}\\
\mbox{$F^2$ inside $F$}\end{array}\right\}\\[1ex]
\pi & \longmapsto & D_F^0(\pi)
\end{array}$$ 
(see \cite[Proposition~8.5]{hl},\cite[Remark~10.4]{ekm}).
If $\pi\cong\pff{a_1,\ldots,a_n}$, then one can find
indices $1\leq i_1<i_2<\ldots<i_m\leq n$ such that
$\pi_{\an}\cong\pff{a_{i_1},\ldots, a_{i_m}}$ by choosing a minimal subset of
the $a_i$'s so that 
$D_F^0(\phi)=F^2(a_1,\ldots,a_n)=F^2(a_{i_1},\ldots, a_{i_m})$.

Important invariants attached to a nonzero totally singular form $\sigma$ over $F$
are the so-called
{\em norm field} $N_F(\sigma)$ of $\sigma$ over $F$ and its norm degree $\ndeg_F(\sigma)$
defined as follows:
$$\begin{array}{rcl}
N_F(\sigma) & = & F^2\bigl(\mbox{$\frac{a}{b}$}\,|\,a,b\in D_F(\sigma)\bigr),\\[1ex]
\ndeg_F(\sigma) & = & [N_F(\sigma):F^2].\end{array}$$
Note that for any $a\in D_F(\sigma)$, we have $aD_F(\sigma)\subset N_F(\sigma)$.
If $\sigma\cong\qf{a_1,\ldots, a_n}$ with, say, $a_1\neq 0$, then we get
$$\begin{array}{rcl} 
N_F(\sigma) & = & F^2\bigl(\mbox{$\frac{a_i}{a_1}$}\,|\,2\leq i\leq n\}\bigr),\\[1ex]
\ndeg_F(\sigma) & = & 2^m\quad \mbox{for some $m\leq n-1$.}\end{array}$$
Next, we collect some facts about the group of similarity factors of a totally singular form
(see \cite[Proposition 8.5]{hl}, \cite[Proposition 6.4]{h}).

\begin{lemma}\label{totsing-sim}
Let $\sigma$ be a nonzero totally singular form over $F$.
\begin{enumerate}
\item[(i)]  $G_F^0(\sigma)$ is a field extension of $F^2$ inside $N_F(\sigma)$.
\item[(ii)]  Let $c_1,\ldots, c_r\in F^*$ such that
$[F^2(c_1,\ldots, c_r):F^2]=2^r$.   Then $F^2(c_1,\ldots, c_r) \subseteq G_F^0(\sigma)$
iff there exists $\sigma'\dom\sigma$ with
$\sigma_{\an}\cong\pff{c_1,\ldots, c_r}\otimes\sigma'$.  In this situation,
$2^r$ divides $\dim\sigma_{\an}$.
\item[(iii)]  Suppose $\sigma$ is anisotropic.  Then $\sigma$ is a quasi-Pfister iff 
$G_F^0(\sigma)=D_F^0(\sigma)$ iff $D_F^0(\sigma)=N_F(\sigma)$.
\item[(iv)] Suppose $\sigma$ is anisotropic.  Then $\sigma$ is similar to a quasi-Pfister
iff $\dim\sigma=\ndeg_F(\sigma)$.
\end{enumerate}
\end{lemma}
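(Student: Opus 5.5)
The plan is to translate everything into linear algebra over $F^2$ via the correspondence $\sigma\mapsto D_F^0(\sigma)$. Since $a\sigma\cong\sigma$ forces $a\sigma_{\an}\cong\sigma_{\an}$ by uniqueness of the anisotropic part, we may replace $\sigma$ by $\sigma_{\an}$ throughout. For anisotropic totally singular $\tau,\tau'$ we have $\tau\cong\tau'$ iff $D_F^0(\tau)=D_F^0(\tau')$. Hence
$$G_F^0(\sigma)=\{a\in F\,|\,aV=V\},\qquad V=D_F^0(\sigma_{\an})=D_F^0(\sigma),$$
where $V$ is a finite-dimensional $F^2$-subspace of $F$.

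For (i), this set clearly contains $F^2$ and is closed under products. It is closed under inverses, because $aV=V$ gives $a^{-1}V=V$. It is closed under sums, because $(a+b)V\subseteq aV+bV=V$, and a nonzero $a+b$ gives an injective $F^2$-linear map of the finite-dimensional space $V$ into itself, hence a bijection. So $G_F^0(\sigma)$ is a field $K\supseteq F^2$. Fix $v_0\in V\setminus\{0\}$; then $a\in K$ implies $av_0\in V$, so $a\in v_0^{-1}V\subseteq N_F(\sigma)$.

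For (ii), first suppose $L=F^2(c_1,\ldots,c_r)\subseteq K$. Then $V$ is an $L$-vector space, so it has an $L$-basis $w_1,\ldots,w_k$. Since $[L:F^2]=2^r$ with the monomials $c_I$ as $F^2$-basis, the elements $c_Iw_j$ form an $F^2$-basis of $V$. This gives $\sigma_{\an}\cong\pff{c_1,\ldots,c_r}\otimes\qf{w_1,\ldots,w_k}$ and $\dim\sigma_{\an}=2^rk$. Each $w_j$ lies in $D_F(\sigma)$, so $\sigma'=\qf{w_1,\ldots,w_k}$ is dominated by $\sigma_{\an}$ and hence by $\sigma$. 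Conversely, given such a decomposition, $V=L\cdot D_F^0(\sigma')$, which is stable under multiplication by $L$; so $L\subseteq K$.

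For (iii), let $\sigma$ be anisotropic and $1\in V$, which may be assumed after scaling for the Pfister statements. If $\sigma$ is a quasi-Pfister, then $V=F^2(a_1,\ldots,a_n)$ is a field, so $aV=V$ for every $a\in V^*$, and thus $K=V$. If $K=V$, then $V$ is a field, and $N_F(\sigma)$, which is generated by quotients of elements of $V$, equals $V$. If $V=N_F(\sigma)$, then $V$ is a finite extension of $F^2$, hence by the cited correspondence it is the value set of an anisotropic quasi-Pfister, so $\sigma$ is a quasi-Pfister. Without $1\in V$, these equivalences hold up to scaling; I expect the only care needed here is to state them for a scaled $\sigma$.

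For (iv), scale so that $1\in V$. Then $V\subseteq N_F(\sigma)$, so $\dim\sigma\le\ndeg_F(\sigma)$, with equality iff $V=N_F(\sigma)$. By (iii), this holds iff $\sigma$ is a quasi-Pfister. Similarity does not change $\ndeg$ or $\dim$, which gives (iv).

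The main obstacle I expect is bookkeeping with the scaling normalization $1\in D_F(\sigma)$ and with anisotropic parts in (ii), together with correctly invoking the correspondence between anisotropic quasi-Pfisters and finite extensions of $F^2$.
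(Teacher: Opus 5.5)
The paper gives no proof of this lemma; it only cites \cite[Proposition 8.5]{hl} and \cite[Proposition 6.4]{h}. Your argument is a correct, self-contained proof in the standard spirit of those sources. Via the bijection between anisotropic totally singular forms and finite-dimensional $F^2$-subspaces of $F$, everything reduces to the stabilizer of $V=D_F^0(\sigma)$, and (ii) becomes the fact that a finite-dimensional vector space over $L=F^2(c_1,\ldots,c_r)$ has an $L$-basis. The gain is that the lemma becomes elementary linear algebra, with only the two correspondences as input.

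A few small corrections:
\begin{itemize}
\item Since $V\neq 0$, the set $\{a\in F\mid aV=V\}$ does not contain $0$, so it is $G_F(\sigma)$ rather than $G_F^0(\sigma)$. Write $G_F^0(\sigma)=\{a\in F\mid aV\subseteq V\}$; your injectivity argument shows exactly that this is the right set.
\item Your closing remark in (iii) is mistaken. No scaling is needed, and the equivalences do not hold merely ``up to scaling''; indeed the three conditions are not scale-invariant. Each condition forces $1\in V$: a quasi-Pfister represents $1$, $1\in G_F(\sigma)$, and $1\in N_F(\sigma)$. So if $1\notin V$ all three conditions fail and the equivalence holds as stated, while if $1\in V$ your argument applies.
\item In the step ``quasi-Pfister $\Rightarrow K=V$'' you also need $K\subseteq V$. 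This follows from $a=a\cdot 1\in aV=V$, or from (i) together with $V=N_F(\sigma)$.
\item In (ii), the reason $\qf{w_1,\ldots,w_k}\dom\sigma_{\an}$ is that it is the $I=\emptyset$ orthogonal summand of $\pff{c_1,\ldots,c_r}\otimes\qf{w_1,\ldots,w_k}$. Equivalently, the $w_j$ are $F^2$-linearly independent. Merely having each $w_j$ in $D_F(\sigma)$ would not suffice.
\end{itemize}
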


\begin{corollary}\label{totsing-sim-even}
Let $\sigma$ be a nonzero totally singular form over $F$.  If $\dim\sigma_{\an}$ is
odd, then $G_F(\sigma)=F^{*2}$.
\end{corollary}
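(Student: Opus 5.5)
The statement follows from Lemma \ref{totsing-sim} by a parity argument. Let $a\in G_F(\sigma)$; we will show that $a\in F^{*2}$. The first step is to reduce to the anisotropic case. Since $\sigma\cong a\sigma$, uniqueness of the anisotropic part gives $\sigma_{\an}\cong a\sigma_{\an}$. Hence $G_F(\sigma)\subseteq G_F(\sigma_{\an})$, and it suffices to treat $\sigma_{\an}$, which is nonzero because its dimension is odd. Equivalently, one can apply Lemma \ref{totsing-sim}(ii) directly to $\sigma$, since that statement is already phrased in terms of $\sigma_{\an}$.

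Now suppose, for a contradiction, that $a\notin F^{*2}$. Then $a\notin F^2$, so $[F^2(a):F^2]=2$, because $a^2\in F^2$ and $a$ is not itself a square. By Lemma \ref{totsing-sim}(i), $G_F^0(\sigma)$ is a field containing $F^2$. Since it also contains $a$, we get $F^2(a)\subseteq G_F^0(\sigma)$.

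We can therefore apply Lemma \ref{totsing-sim}(ii) with $r=1$ and $c_1=a$. It yields $\sigma'\dom\sigma$ with $\sigma_{\an}\cong\pff{a}\otimes\sigma'$, so $2$ divides $\dim\sigma_{\an}$. This contradicts the assumption that $\dim\sigma_{\an}$ is odd. Hence $a\in F^{*2}$, and since $F^{*2}\leq G_F(\sigma)$ always holds, we conclude $G_F(\sigma)=F^{*2}$.

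There is no real obstacle; the only point to check is that $F^2(a)\subseteq G_F^0(\sigma)$ holds. This needs $a\in G_F^0(\sigma)$ together with the field property from Lemma \ref{totsing-sim}(i).
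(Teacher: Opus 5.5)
Your proposal is correct and follows essentially the same route as the paper's proof. You take $a\in G_F(\sigma)\setminus F^{*2}$, use Lemma~\ref{totsing-sim}(i) to get $F^2(a)\subseteq G_F^0(\sigma)$, and then apply Lemma~\ref{totsing-sim}(ii) with $r=1$ to get $\sigma_{\an}\cong\pff{a}\otimes\sigma'$, which has even dimension. Your preliminary reduction to $\sigma_{\an}$ is harmless but unnecessary, since part (ii) is already stated in terms of $\sigma_{\an}$, as you note yourself.
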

\begin{proof}  Let $\sigma$ be a nonzero totally singular form over $F$ and
suppose there exists some $a\in  G_F(\sigma)\setminus F^{*2}$.  So in particular,
$[F^2(a):F^2]=2$.
Since $G_F^0(\sigma)$ is a field extension  of $F^2$, we have $F^2(a)\subset G_F^0(\sigma)$.
By Lemma \ref{totsing-sim}(ii), there exists $\sigma'\dom\sigma$ such that
$\sigma_{\an}\cong\pff{a}\otimes\sigma'$ and thus $\dim\sigma_{\an}=2\dim\sigma'$ is even.
\end{proof}

As a further consequence, one can show that the group of similarity factors of the quasi-linear part of a semi-singular form defines an invariant of the Vishik-equivalence class of that form.  
\begin{corollary}\label{sim-factors-inv}
Let $\phi$ and $\psi$ be semi-singular or totally singular forms over $F$ with $\phi\vis\psi$.  Then
$G_F(\ql(\phi))=G_F(\ql(\psi))$.
\end{corollary}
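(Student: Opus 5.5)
The plan is to characterize membership $a\in G_F(\sigma)$, for a totally singular form $\sigma$, purely in terms of the defect of $\sigma$ over the field extension $E=F(\sqrt{a})$. Since Vishik-equivalence controls defects over every extension, this will give the claim.

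\emph{Step 1: reductions.} Since $\phi\vis\psi$, the forms have the same type $(r,s)$ by \cite[Lemma~2.8]{hz}. Put $\sigma=\ql(\phi)$ and $\tau=\ql(\psi)$, both of dimension $s>0$. For any extension $E/F$ we have $\ql(\phi_E)\cong\sigma_E$. Writing $\phi_E\cong i\times\HH\perp(\phi_E)_\an\perp j\times\qf{0}$, the quasi-linear part of $\phi_E$ is $\ql((\phi_E)_\an)\perp j\times\qf{0}$, and $\ql((\phi_E)_\an)$ is anisotropic. Hence $\iql(\phi_E)=\iql(\sigma_E)$, and likewise $\iql(\psi_E)=\iql(\tau_E)$. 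So $\iql(\sigma_E)=\iql(\tau_E)$ for all $E/F$; in particular $\dim\sigma_\an=\dim\tau_\an$. Since $G_F(\sigma)=G_F(\sigma_\an)$, because the anisotropic part is unique up to isometry, we may argue with $D=D_F^0(\sigma)$, an $F^2$-space of dimension $d=\dim\sigma_\an$.

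\emph{Step 2: the key criterion.} Let $a\in F^*\setminus F^{*2}$ and $E=F(\sqrt a)$, so that $E^2=F^2(a)$ and $[E^2:F^2]=2$. The value set satisfies $D_E^0(\sigma_E)=E^2D=D+aD$, which is an $E^2$-space. Therefore
$$\dim(\sigma_E)_\an=\dim_{E^2}(D+aD)=\tfrac12\dim_{F^2}(D+aD)\geq \tfrac d2,$$
with equality iff $aD=D$. Using the correspondence between anisotropic totally singular forms and $F^2$-subspaces, $aD=D$ holds iff $a\sigma_\an\cong\sigma_\an$, i.e. iff $a\in G_F(\sigma)$. Consequently
$$a\in G_F(\sigma)\iff \iql(\sigma_E)-\iql(\sigma)=\tfrac12\dim\sigma_\an .$$

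\emph{Step 3: conclusion.} By Step 1, both sides of this criterion coincide for $\sigma$ and $\tau$ for every $a\notin F^{*2}$. Squares lie in both groups. Hence $G_F(\sigma)=G_F(\tau)$.

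The main obstacle is Step 2. One must make sure that the $E^2$-span of $D$ is $D+aD$ and compute its $E^2$-dimension correctly via $[E^2:F^2]=2$. One must also check that $aD=D$ is exactly the similarity condition; this is immediate from the one-one correspondence $\sigma\mapsto D_F^0(\sigma)$ recalled above.
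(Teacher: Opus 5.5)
Your proof is correct, and its skeleton is the paper's: pass to the quasi-linear parts, test a nonsquare $a$ over $E=F(\sqrt a)$, and use that $a$ is a similarity factor exactly when the defect over $E$ is as large as it can be. The difference is in how that criterion is justified.

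The paper goes through Lemma~\ref{totsing-sim}(ii) to write $\phi\cong\pff{a}\otimes\sigma_1$. It then invokes \cite[Corollary~5.8(iii)]{h} twice to pass between such a factorization and the equality $\iql(\phi_E)=\frac12\dim\phi$. You instead derive the equivalence directly from the value-set correspondence, via $D_E^0(\sigma_E)=E^2D=D+aD$ and $[E^2:F^2]=2$.

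Your route is self-contained and elementary. Because you phrase the criterion as the jump $\iql(\sigma_E)-\iql(\sigma)=\frac12\dim\sigma_\an$, it also treats the degenerate case $\sigma\cong\qf{0,\ldots,0}$ uniformly, where the paper handles it separately. The paper's route yields, in exchange, the structural information $\sigma\cong\pff{a}\otimes\sigma_1$ with $(\sigma_E)_\an\cong\sigma_1$, which the corollary does not need.
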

\begin{proof}  It follows immediately from the definition of Vishik-equiva\-lence that
$\phi\vis\psi$ implies $\ql(\phi)\vis\ql(\psi)$. 
Thus, to prove the claim, it suffices to assume that $\phi$ and $\psi$ are totally singular
of the same positive dimension.  By Vishik equivalence (applied over $F$ itself), 
$\dim\phi_\an=\dim\psi_\an$.  Now $G_F(\qf{0,\ldots,0})=F^*$, and if
$\dim\phi_\an=\dim\psi_\an>0$, we clearly have $G_F(\phi_\an)=G_F(\phi)$ and 
$G_F(\psi_\an)=G_F(\psi)$.  So without loss of generality, $\phi$ and $\psi$ are anisotropic.

Let $a\in G_F(\phi)$.  If
$a\in F^{*2}$ then clearly $a\in G_F(\psi)$.  So suppose that
$a\in G_F(\phi)\setminus F^{*2}$.
Then, by Lemma~\ref{totsing-sim},
there is a totally singular form $\sigma_1$ with $\phi\cong\pff{a}\otimes\sigma_1$.
(Note that necessarily, $\dim\ql(\phi)$ is even.) 

Now consider $E=F(\sqrt{a})$.  It follows from \cite[Corollary 5.8(iii)]{h} that
$\iql(\phi_E)=\frac{1}{2}\dim\phi$. (In fact, we have
$(\phi_E)_\an\cong\sigma_1$.) By Vishik-equivalence, this also forces
$\iql(\psi_E)=\frac{1}{2}\dim\psi$ which, once more by
\cite[Corollary 5.8(iii)]{h}, implies that there exists a totally singular form
$\sigma_2$ with $\psi\cong\pff{a}\otimes\sigma_2$.  This in turn implies that
$a\in  G_F(\psi)$ and thus $G_F(\phi)\subset G_F(\psi)$.
The reverse inclusion follows by symmetry.
\end{proof}

\begin{proposition}\label{totsing-sim-trivial}
Let $\pi$ be an anisotropic quasi-Pfister and let $\sigma$ and $\tau$ be totally
singular forms with $\dim\sigma=s\geq 1$, $\dim\tau=t\geq 1$  and such that
$\sigma\dom\pi$, $\tau\dom\pi$.  Let $x\in F^*\setminus D_F(\pi)$.
\begin{enumerate}
\item[(i)] $\pff{x}\otimes\pi\cong\pi\perp x\pi$ is anisotropic and therefore
also $\sigma\perp x\tau$.
\item[(ii)]  If $s\neq t$, then
$G_F(\sigma\perp x\tau)=G_F(\sigma)\cap G_F(\tau)$.
\item[(iii)]  Suppose $s\neq t$.  If $\dim\sigma$ is odd or
if $\dim\tau$ is odd, then $G_F(\sigma\perp x\tau)\cong F^{*2}$.
\end{enumerate}
\end{proposition}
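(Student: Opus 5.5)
For (i) and (ii) I would work entirely on the level of $F^2$-subspaces of $F$, using the correspondence between anisotropic totally singular forms and finite-dimensional $F^2$-subspaces recalled at the start of this section. Put $K=D_F^0(\pi)$. Since $\pi$ is an anisotropic quasi-Pfister, $K$ is a field extension of $F^2$ inside $F$, and $\dim_{F^2}K=\dim\pi$.

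For (i), note $D_F^0(\pi\perp x\pi)=K+xK$. The sum is direct: if $xk_1=k_2$ with $k_1\neq 0$, then $x=k_2/k_1\in K$. This would give $x\in D_F(\pi)$, since $x\neq 0$. So $\dim_{F^2}D_F^0(\pi\perp x\pi)=2\dim\pi=\dim(\pi\perp x\pi)$. Because $\dim\phi_{\an}=\dim_{F^2}D_F^0(\phi)$, the form $\pi\perp x\pi\cong\pff{x}\otimes\pi$ is anisotropic. Moreover $\sigma\perp x\tau\dom\pi\perp x\pi$, so $\sigma\perp x\tau$ is anisotropic as well. The same argument shows that $\sigma$ and $\tau$ are anisotropic.

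For (ii), put $S=D_F^0(\sigma)\subseteq K$ and $T=D_F^0(\tau)\subseteq K$, so that $D_F^0(\sigma\perp x\tau)=S\oplus xT$. By the correspondence, for an anisotropic totally singular $\rho$ and $a\in F^*$ we have $a\in G_F(\rho)$ iff $aD_F^0(\rho)=D_F^0(\rho)$.

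The inclusion $\supseteq$ is immediate. If $a\sigma\cong\sigma$ and $a\tau\cong\tau$, then $a(\sigma\perp x\tau)\cong\sigma\perp x\tau$.

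For $\subseteq$, take $a\in G_F(\sigma\perp x\tau)$. By Lemma~\ref{totsing-sim}(i), $a\in N_F(\sigma\perp x\tau)$. All ratios of values of $\sigma\perp x\tau$ lie in the field $K(x)$. Since $x^2\in F^2\subseteq K$ and $x\notin K$, we have $K(x)=K\oplus xK$. Hence I can write $a=u+xv$ with $u,v\in K$.

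Now I compare components in the decomposition $K\oplus xK$:
\begin{itemize}
\item for $s\in S$, $as=us+x(vs)\in S\oplus xT$, so $uS\subseteq S$ and $vS\subseteq T$;
\item for $t\in T$, $a(xt)=x^2vt+x(ut)$, so $x^2vT\subseteq S$ and $uT\subseteq T$.
\end{itemize}
If $v\neq 0$, multiplication by $v$ and by $x^2v$ is injective. Then $vS\subseteq T$ gives $s\leq t$, and $x^2vT\subseteq S$ gives $t\leq s$, contradicting $s\neq t$. So $v=0$ and $a=u\neq 0$. Then $uS\subseteq S$ and $uT\subseteq T$ are equalities by dimension, so $a\in G_F(\sigma)\cap G_F(\tau)$.

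This component comparison, and in particular knowing a priori that $a\in K(x)$, is the only real step. Lemma~\ref{totsing-sim}(i) handles it via the norm field.

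For (iii), $\sigma$ and $\tau$ are anisotropic, so Corollary~\ref{totsing-sim-even} applied to whichever of them has odd dimension gives that its group of similarity factors is $F^{*2}$. By (ii), $G_F(\sigma\perp x\tau)$ is contained in that group. Since it always contains $F^{*2}$, it equals $F^{*2}$.
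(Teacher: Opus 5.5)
Your proof is correct, and it follows the same skeleton as the paper's. First you show that a similarity factor $a$ of $\sigma\perp x\tau$ lies in $K(x)=K\oplus xK=D_F^0(\pi\otimes\pff{x})$. You get this from the norm field via Lemma~\ref{totsing-sim}(i); the paper gets it from the roundness $G_F(\pi\otimes\pff{x})=D_F(\pi\otimes\pff{x})$. Then you rule out $a\notin K$ using $s\neq t$, and show that $a\in K$ stabilizes both value sets. Parts (i) and (iii) are essentially the paper's arguments.

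The difference lies in how the last two steps of (ii) are carried out. The paper stays at the level of forms:
\begin{itemize}
\item for $y\in D_F(\pi)$ it runs the Witt-equivalence chain $\sigma\perp x\tau\wit(\sigma\perp y\sigma)_{\an}\perp x(\tau\perp y\tau)_{\an}$, gets an isometry by anisotropy, and compares dimensions using $\sigma\dom(\sigma\perp y\sigma)_{\an}$ from Lemma~\ref{totsing-ani};
\item for $y\notin D_F(\pi)$ it uses the anisotropy of $\pi\otimes\pff{y}$ and Lemma~\ref{totsing-ani}. These show that the Witt-equivalent form $(\sigma\perp y\sigma)\perp x(\tau\perp y\tau)$ has anisotropic part of dimension at least $\max\{2s,2t\}>s+t$.
\end{itemize}

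You instead write $a=u+xv$ and read off $uS\subseteq S$, $vS\subseteq T$, $x^2vT\subseteq S$ and $uT\subseteq T$ by comparing coordinates. Both cases then become one-line linear algebra: if $v\neq 0$, injectivity of multiplication by $v$ and by $x^2v$ forces $s=t$. Your version is shorter and more transparent. Since $a\in K(x)$ holds without any hypothesis on $s,t$, the same four inclusions also characterize $G_F(\sigma\perp x\tau)$ when $s=t$. The paper's version uses only standard form-theoretic tools (Witt equivalence, domination, anisotropic parts) rather than coordinates in the field $K(x)$.

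A minor point: you use $s,t$ both for the dimensions and for elements of $S,T$, so rename one of them.
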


\begin{proof}
(i)  Suppose $\pi\perp x\pi$ is isotropic. Then the anisotropy of the quasi-Pfister
$\pi$ implies that 
there exist $u,v\in D_F(\pi)=G_F(\pi)$ with $u+xv=0$, i.e. $u=xv$, and hence
$x=u/v\in G_F(\pi)=D_F(\pi)$, a contradiction.
Obviously,
$\sigma\perp x\tau\dom\pi\perp x\pi\cong\pi\otimes\pff{x}$ and thus 
$\sigma\perp x\tau$ is anisotropic as well.

(ii)  It is clear that  $G_F(\sigma)\cap G_F(\tau)\subseteq G_F(\sigma\perp x\tau)$
(even without the assumption on the dimensions $s,t$).  Conversely, suppose
$y\in G_F(\sigma\perp x\tau)$.  Let $z\in D_F(\sigma)$.  Then
$$yz\in D_F(y(\sigma\perp x\tau))=D_F(\sigma\perp x\tau)\subseteq
D_F(\pi\otimes\pff{x})=G_F(\pi\otimes\pff{x}),$$
but $z\in  D_F(\sigma)\subseteq D_F(\pi\otimes\pff{x})=G_F(\pi\otimes\pff{x})$ as well,
hence $y\in G_F(\pi\otimes\pff{x})$.

If $y\in D_F(\pi)=G_F(\pi)$, then $\sigma,y\sigma\dom\pi$ and hence
$(\sigma\perp y\sigma)_{\an}\dom\pi$, and similarly, 
$(\tau\perp y\tau)_{\an}\dom \pi$.  Hence,
$(\sigma\perp y\sigma)_{\an}\perp x(\tau\perp y\tau)_{\an}$ is anisotropic (similarly
as in (i)).
Since $y\in G_F(\sigma\perp x\tau)$, we obtain
$$\begin{array}{rcl}
\sigma\perp x\tau & \wit &   (\sigma\perp x\tau) \perp (\sigma\perp x\tau)\\
 & \wit & (\sigma\perp x\tau) \perp y(\sigma\perp x\tau)\\
 & \wit & (\sigma\perp y\sigma)\perp x(\tau\perp y\tau)\\
 & \wit & (\sigma\perp y\sigma)_{\an}\perp x(\tau\perp y\tau)_{\an}
 \end{array}$$
 and the anisotropy implies 
 $$\sigma\perp x\tau\cong (\sigma\perp y\sigma)_{\an}\perp x(\tau\perp y\tau)_{\an}.$$
 But by Lemma \ref{totsing-ani}, we have
 $\sigma\dom (\sigma\perp y\sigma)_{\an}$ and $\tau\dom(\tau\perp y\tau)_{\an}$.
 Comparing dimensions, this necessitates
 $\sigma\cong (\sigma\perp y\sigma)_{\an}$ and $\tau\cong(\tau\perp y\tau)_{\an}$
 and thus $D_F^0(y\sigma)\subseteq D_F^0(\sigma)$ and $D_F^0(y\tau)\subseteq D_F^0(\tau)$.
 By the anisotropy of $\sigma$ and $\tau$ and by comparing the $F^2$-dimensions
 of the value sets we get $y\sigma\cong \sigma$ and $y\tau\cong\tau$, so
 $y\in G_F(\sigma)\cap G_F(\tau)$.
 
Suppose $y\notin D_F(\pi)$.  Then  $\pi\otimes\pff{y}$ and therefore also 
$\sigma\perp y\sigma$ and $\tau\perp y\tau$ are anisotropic
(similarly as in (i)).  
By the same reasoning as above, we get
$$\sigma\perp x\tau\wit (\sigma\perp y\sigma)\perp x(\tau\perp y\tau)$$
and by Lemma \ref{totsing-ani} and the anisotropy of 
$\sigma\perp y\sigma$ and $\tau\perp y\tau$ we get
$$\dim\bigl((\sigma\perp y\sigma)\perp x(\tau\perp y\tau)\bigr)_{\an}\geq \max\{ 2s,2t\},$$
but $s\neq t$ and thus $\dim(\sigma\perp x\tau)_{\an}=s+t<\max\{ 2s,2t\}$,
a contradiction.

(iii) follows from (ii) and Corollary \ref{totsing-sim-even}.
\end{proof}

\section{Construction of nonsimilar half-neighbors}
Let $n,r,s$ be nonnegative integers with $n\geq 3$, $r\geq 1$
and $2^n=2r+s$.  Note that $s$ will be even.
To prove the Main Theorem and in view of Proposition \ref{hn-prop}(iv),
it suffices to produce nonsimilar half-neighbors $\phi$ and $\psi$ of some
anisotropic $\pi\in P_{n+1}F$ over a suitably chosen field $F$.

Given any field $F_0$ of characteristic $2$, it was shown in \cite[Theorem~6.5]{hz} that there
exists a field extension $F/F_0$ with an anisotropic Pfister form $\pi\in P_{n+1}F$
and nonsingular half-neighbors $\phi$ and $\psi$ of $\pi$ such that $\phi$ and $\psi$ 
are not similar.  This settles the case of type $(2^{n-1},0)$ and so we focus in our constructions on the remaining cases with $r,s>0$.  

\subsection*{Construction A}
In this construction, we provide examples of nonsimilar half-neighbors of dimension
$2^n$ ($n\geq 3$) and type $(r,2k)$ (so $2r+2k=2^n$) and $2\leq k\leq 2^{n-1}-1$.
This will not yield examples of type $(2^{n-1}-1,2)$, a case covered in our
Construction B.

Let $F_0$ be any field of characteristic $2$ and let $F/F_0$ be any field extension
such that there exist $a_1,\ldots, a_n,x\in F^*$ such that for
the bilinear Pfister form $\pi_b=\pff{a_1,\ldots, a_n}_b$ and its associated
quasi-Pfister $\pi\cong \pff{a_1,\ldots, a_n}$, we have that both 
$$\pi_b\otimes [1,x]\cong \qpf{a_1,\ldots,a_n,x}\quad\mbox{and}\quad\pi\otimes\pff{x}\cong\pff{a_1,\ldots, a_n,x}$$
are anisotropic over $F$.  We may, for example, take a rational function field in
$n+1$ variables (which we call $a_1,\ldots, a_n,x$) over $F_0$.

Write $2k=\ell+m$ with $\ell, m$ odd and $0<\ell<m$
(it is here that we need $k\geq 2$).  Choose any diagonalization of the
{\em bilinear} Pfister form $\pi_b$ of the following type:
$$\pi_b\cong\qf{c_1,\ldots, c_r,d_1,\ldots, d_\ell, e_1,\ldots,e_m,f_1,\ldots f_r}_b$$
for suitable $c_i, d_i,e_i,f_i\in F^*$ (note that $r+\ell+m+r=2r+2k=2^n$).
So we get 
$$\begin{array}{rcl}
\pi_b\otimes [1,x] & \cong & \qf{c_1,\ldots, c_r}_b\otimes [1,x]\perp \qf{d_1,\ldots, d_\ell}_b\otimes [1,x]\\
 & & \perp  \qf{e_1,\ldots,e_m}_b\otimes [1,x]\perp \qf{f_1,\ldots f_r}_b\otimes [1,x].
 \end{array}$$
We have 
$$\begin{array}{rcl}
\delta:=\qf{d_1,\ldots, d_\ell}\dom\pi & \mbox{and} & \qf{d_1,\ldots, d_\ell}\dom
\qf{d_1,\ldots, d_\ell}_b\otimes [1,x]\\
\epsilon:=\qf{e_1,\ldots,e_m}\dom\pi & \mbox{and} & x\qf{e_1,\ldots,e_m}\dom\qf{e_1,\ldots,e_m}_b\otimes [1,x].\end{array}$$
Now consider
$$\phi:= \qf{c_1,\ldots, c_r}_b\otimes [1,x]\perp\delta\perp x\epsilon\dom \pi_b\otimes [1,x].$$
Then $\dim\phi =2r+\ell+m=2r+2k=2^n$, and we see by Lemma \ref{complement} that 
$$\psi:=\qf{f_1,\ldots f_r}_b\otimes [1,x]\perp \delta\perp x\epsilon$$ is the complement
of $\phi$ in $\pi_b\otimes [1,x]$.  Hence, $\phi$ and $\psi$ are half-neighbors of each other
with respect to the anisotropic Pfister form $\pi_b\otimes [1,x]$.

Note that $\ql(\phi)\cong \delta\perp x\epsilon$ with $\delta,\epsilon\dom\pi$ and
$\pi\otimes\pff{x}$ anisotropic, so in particular, $x\notin D_F(\pi)$.
Furthermore,  $\dim\delta=\ell <m=\dim\epsilon$ and $\ell,m$ odd.
By Proposition \ref{totsing-sim-trivial}(iii) we have that $G_F(\ql(\phi))=F^{*2}$,
and by Corollary \ref{nonsim-hn}, $\phi$ and $\psi$ are nonsimilar half-neighbors.

\subsection*{Construction B}
In this construction, we provide examples of nonsimilar half-neighbors of dimension
$2^n$ ($n\geq 3$) and type $(r,2^m)$ and $1\leq m\leq n-2$.  We then have
$r=2^{n-1}-2^{m-1}$.  For $m=1$, this then will also cover the remaining case of type $(2^{n-1}-1,2)$.
We use the same setting with $F/F_0$, $a_1,\ldots,a_n,x$, $\pi_b$ and $\pi$
as in Construction A (but we actually do not require $\pff{a_1,\ldots,a_n,x}$ to be 
anisotropic).

Since $1\leq m\leq n-2$, we get an integer $k:=2^{n-1}-2^{m}-2^{m-1}>0$
and we can diagonalize $\pi_b$
in the following way for suitably chosen $c_i,d_i\in F^*$ and with
the bilinear $m$-Pfister $\alpha_b=\pff{a_1,\ldots,a_m}_b$:
$$\pi_b\cong\alpha_b\perp a_n\alpha_b\perp
\underbrace{\qf{c_1,\ldots,c_k}_b}_{\gamma_b}
\perp\underbrace{\qf{d_1,\ldots,d_\ell}_b}_{\delta_b}$$
where $\ell=2^n-2\dim\alpha_b-k=2^{n-1}-2^{m-1}>0$.

Let $\alpha\cong\pff{a_1,\ldots,a_m}$ be the totally singular quasi-Pfister associated with
$\alpha_b$ and define
$$\begin{array}{rcl}
\phi & \cong & \alpha_b\otimes [1,x]\perp \gamma_b\otimes[1,x]\perp a_n\alpha\\
 & \dom &  \alpha_b\otimes [1,x]\perp \gamma_b\otimes[1,x]\perp a_n\alpha_b\otimes [1,x]
 \perp\delta_b\otimes [1,x]\cong\pi_b\otimes [1,x]
\end{array}$$
with
$$\dim\phi=3\dim\alpha_b+2k=2^n=\frac{1}{2}\dim\pi_b\otimes [1,x].$$
The complement of $\phi$ in $\pi_b\otimes [1,x]$ is given by the form
$$\psi=\delta_b\otimes [1,x]\perp a_n\alpha$$
of dimension $2\ell +2^m=2^n$.
In particular, $\phi$ and $\psi$ are half-neighbors of each other with respect to 
$\pi_b\otimes [1,x]$.

Suppose $\phi\sml\psi$.  Then there exists some $\lambda\in F^*$ such that
$\lambda\phi\cong\psi$ which in turn implies $\lambda\ql(\phi)\cong\ql(\psi)$ by the uniqueness
of the quasi-linear part (up to isometry).
But $\ql(\phi)\cong\ql(\psi)\cong a_n\alpha$, so
$$\lambda\in G_F(a_n\alpha)=G_F(\alpha)=D_F(\alpha)=D_F(\alpha_b)=G_F(\alpha_b)$$
by the roundness of bilinear resp. quasi-Pfister forms.  This implies
$\lambda\alpha_b\otimes [1,x]\cong \alpha_b\otimes [1,x]$ and we get
$$\lambda\phi\cong \alpha_b\otimes [1,x]\perp \lambda\gamma_b\otimes[1,x]\perp a_n\alpha.$$
Since 
$\psi\perp\psi\wit a_n\alpha$ (see Remark \ref{qperpq}), we obtain
$$\begin{array}{rcl}
\underbrace{\lambda\gamma_b\otimes [1,x] \perp a_n\alpha}_A
& \wit & \lambda\gamma_b\otimes [1,x] \perp \psi\perp\psi\\[-1.5ex]
& \wit & \lambda\gamma_b\otimes [1,x] \perp \lambda\phi\perp\psi\\[1ex]
& \wit & \lambda\gamma_b\otimes [1,x]\perp
\underbrace{\alpha_b\otimes [1,x]\perp\lambda\gamma_b\otimes[1,x]\perp a_n\alpha}_{\lambda\phi}\\[-.5ex]
 & &
\perp \underbrace{\delta_b\otimes [1,x]\perp a_n\alpha}_\psi\\
& \wit & \underbrace{\alpha_b\otimes [1,x]\perp\delta_b\otimes [1,x]\perp a_n\alpha}_B\dom\pi_b\otimes [1,x].
\end{array}$$
We see that the form $B$ is anisotropic as it is dominated by the aniso\-tropic
Pfister form $\pi_b\otimes [1,x]$.  Also,
$$\begin{array}{rcl} \dim A & = & 2k+2^m= 2^n-2^{m+1}-2^m+2^m= 2^n-2^{m+1},\\[1ex]
\dim B & = & 2\cdot 2^m+2\ell+2^m=2^n+2^{m+1}.\end{array}$$
Thus, $\dim B>\dim A$, a contradiction to Witt equivalence and the anisotropy of $B$.
Hence, $\phi$ and $\psi$ are nonsimilar half-neighbors as desired.

\begin{remark}  In Construction A, we produced half-neighbors whose quasi-linear part $\sigma$
has the  `smallest' possible group of similarity factors:  $G_F^0(\sigma)=F^2$, i.e.,
$[G_F^0(\sigma):F^2]=1$.  In
Construction B, the group of similarity factors of $\sigma$ is in some sense
as large as possible since $\sigma$ is similar to an $m$-fold quasi-Pfister
and therefore,  $[G_F^0(\sigma):F^2]=2^m=\dim\sigma$ (see Lemma \ref{totsing-sim}).
\end{remark}

\begin{remark}  It seems tempting to apply the half-neighbor construction also in the 
case of totally singular forms using quasi-Pfister forms.
Say, two totally singular forms $\phi$ and $\psi$ of dimension $2^n$ are called
quasi-half-neighbors if there exist $a,b\in F^*$ such that $a\phi\perp b\psi\cong\pi$
for some  $(n+1)$-fold quasi-Pfister form $\pi$.  Then in general, we won't even have
that $\phi$ and $\psi$ are Vishik-equivalent.  For example,
$\qf{1}\perp\qf{0}\cong \pff{1}$, so $\qf{1}$ and $\qf{0}$ would be quasi-half-neighbors.

Even if we require $\pi$ to be anisotropic, we won't get Vishik-equiva\-lence in general
for $n\geq 1$.  Let $n=1$ (for higher $n$, we get analogous counterexamples) and take any field
$F$ with an anisotropic 
$2$-fold quasi-Pfister $\pi=\pff{a,b}$, let $\phi\cong\qf{1,a}$, $\psi\cong\qf{1+b,ab}$.
Then $\phi\perp\psi\cong\pi$ since $\qf{1,1+b}\cong\qf{1,b}$.  Let $E=F(\sqrt{a})$.  Then $(\phi_E)_\an\cong\qf{1}$ whereas
$\psi_E\cong\qf{1+b,b}\cong\qf{1,b}$ stays anisotropic.  Note that in this example,
$G_F^0(\phi)=F^2(a)\neq F^2(ab(1+b))=G_F^0(\psi)$, also implying that the two forms
are not Vishik-equivalent (see Corollary \ref{sim-factors-inv}).
\end{remark}

\begin{remark}  We know (see the Introduction) that Vishik-equiva\-lence implies
similarity in dimensions $\leq 5$ and 
and that there are counterexamples in dimension $8$
for all types that are not totally singular.  It would be nice to close the gap in dimensions
$6$ and $7$.  Further cases where Vishik-equivalence implies similarity in dimension $6$ or $7$
include the cases where one of the forms is as follows:
\begin{itemize}
\item dimension $6,7$: Pfister neighbor (see Introduction);
\item dimension $6$:  Albert form (i.e., $6$-dimensional nonsingular form with trivial
Arf-invariant), \cite[Th\'eor\`eme~1.1]{l};
\item dimension $6$: the form contains (up to a scalar) a $2$-fold quadratic Pfister form as a subform but is not a Pfister neighbor.  This case can be deduced from \cite[Proposition~6.1.12]{f};
\item dimension $7$:  type $(r,s)\in \{ (3,1),(2,3)\}$ (see Introduction).
\end{itemize}
In all other cases of nonsingular or semi-singular forms in dimensions $6$ or $7$, the question
whether Vishik-equivalence always implies similarity or whether there are conterexamples seems
to be open at this point.  In particular, the only open case in
dimension $7$ is that of type $(1,5)$.

Little is known in higher dimensions that are not $2$-powers except for the cases mentioned in
the Introduction.
\end{remark}

\end{document}